\documentclass{amsart}
\usepackage{graphicx} 
\usepackage{subfigure}
\usepackage{subcaption}
\usepackage{amsmath}
\usepackage{amsthm}
\usepackage{amssymb}
\usepackage{natbib}
\usepackage{amsfonts}
\usepackage{titlesec}
\titlelabel{\thetitle.\quad}
\usepackage{caption}
\usepackage{float}
\usepackage{color,xcolor,mathrsfs,mathtools,calc,bm,array,graphicx}
\usepackage{multimedia,media9,xmpmulti,siunitx,animate,tikz}

\newtheorem{theorem}{Theorem}
\newtheorem{lemma}{Lemma}

\theoremstyle{definition} 
\newtheorem{example}{Example}

\newtheorem{definition}{Definition}
\newtheorem{remark}{Remark}

\title[Harmonic Functions with Interesting Caustics]{Zeros of Harmonic Functions whose Caustic is a Non-Singular Image of an Epicycloid}
\author{Eli Sampson}
\date{August 2025}

\begin{document}

\maketitle

\begin{abstract}
    Recent researchers have investigated how the zeros of certain families of complex harmonic functions change with a single parameter. Many leverage the well-behaved images of the critical curve and the harmonic analogue of the Argument Principle to prove zero-counting theorems. In this paper, we investigate the zeros of a family of harmonic functions for which the image of its critical curve is a non-singular linear image of an epicycloid. By analyzing this curve and using the harmonic analogue of the Argument Principle, we obtain a detailed zero-counting theorem for our family.
\end{abstract}

\section{Introduction}

We consider the zeros of a family of complex-valued harmonic functions. Such functions have the form $f = u+iv$, where both $u$ and $v$ are real-valued harmonic functions. They can also be written in the form $f = h+\overline{g}$, where $h$ and $g$ are analytic.

Extensive research has explored the similarities and differences between analytic and harmonic functions.
Analytic functions on a simply connected domain are sense-preserving (or orientation-preserving) at all points. Complex-valued harmonic functions, however, may have regions where they are sense-preserving and regions where they are sense-reversing. The Fundamental Theorem of Algebra states that for non-constant analytic polynomials, the number of zeros is the same as the degree. For harmonic polynomials, however, the correct generalization is more nuanced and depends on how many zeros are located in both the sense-preserving and sense-reversing regions.

Researchers have worked extensively to understand how many zeros a harmonic function can have. Sheil-Small \cite{sheil} conjectured and Wilmhurst \cite{wilmshurst1998valence} proved that for a harmonic polynomial $f = h + \overline{g}$ for which $\deg h = n$, $\deg g= m$, and $m < n$, the largest number of zeros $f$ can have is $n^2$. Building upon this result,  S\`{e}te and Zur \cite{sete2024zeros} showed that for every $k  = n,n + 1,...,n^2$ there exists a harmonic polynomial of degree $n$ with $k$ zeros.

Subsequent work has focused on proving bounds on the number of zeros for specific families of harmonic functions and analyzing the locations of their zeros. (See \cite{melman24}, \cite{legesse2022location}, \cite{melman1}, \cite{alemu2022imagecriticalcirclezerofree}.) Others seek to prove detailed zero-counting theorems that explain precisely how the number of zeros changes with certain parameters. (See \cite{BDHPWW}, \cite{BDDLO}, \cite{sam}, \cite{Ararso_Alemu_2022}, \cite{brooks2025usingrealvariabletechniquesstudy}.)

For example, Brilleslyper et al. \cite{BBDHS} investigated a family of trinomials
\begin{equation}\label{Brilleslyper}
    p_c(z) =  z^n + c\overline{z}^k -1
\end{equation}
where $1 \leq k \leq n-1$, $n\geq3,$ $c \in \mathbb{R}_{+}$, and $\gcd(n,k) = 1$. 
They concluded that as $c$ increases, the number of zeros increases monotonically from a minimum of $n$ to a maximum of $n + 2k$. Considering a similar family but with poles, Brooks and Lee \cite{lee} investigated the family 
\begin{equation}\label{leethesis}
r_c(z) = z^n + \frac{c}{\overline{z}^k} - 1, \hspace{1em} c \in \mathbb{C} \setminus \{0\}
\end{equation}
for $n,k \in \mathbb{N}$ and proved a similarly detailed zero-counting theorem. The strategies they implemented to analyze both families were similar. First, they leveraged the fact that for both families the critical curve (the curve that separates the sense-preserving and sense-reversing regions) is a circle. Then, they used the fact that the caustic (the image of the critical curve under the function) is a well-known parametric curve. For the family \eqref{Brilleslyper}, the caustic is a hypocycloid and in the case of the family \eqref{leethesis}, it is an epicycloid. Then, using the harmonic analogue of the Argument Principle, they counted the zeros by finding the winding number about the origin of the caustic.

While this method of analysis is powerful for proving detailed zero-counting theorems, it is limited to functions whose caustic is an easily analyzable parametric curve. In this paper, we explore how a similar strategy can be used to count the zeros of a family of functions whose caustic is a nonstandard parametric curve. Specifically, we investigate the family
\begin{equation}\label{ourfamily}
 f_a(z) = \frac{a}{n+1}z^{n+1} - \frac{1}{n}z^{-n} + \frac{1}{n+1}\overline{z}^{n+1} - \frac{a}{n}\overline{z}^{-n}-1   
\end{equation}
with $n \in \mathbb{N}$, $n\geq4$, and $a>1$. By design, this family of functions has a critical curve that is a circle and a caustic that is the non-singular image of a standard parametric curve. By analyzing the transformation between the well-understood standard parametric curve and the caustic of our function, we find the winding number of the caustic and then apply the harmonic analogue of the Argument Principle to prove a detailed zero-counting theorem for the family \eqref{ourfamily}. 

\begin{theorem}\label{maintheorem}
    Let $f_a$ be as in \eqref{ourfamily} and let $N = \lfloor \frac{n+1}{2} \rfloor$. There exist $N$ critical values $a_j$ with $1 < a_1 < a_2 < \cdots < a_N$, such that:

    \begin{enumerate}
        \item if $1 < a < a_1$, then $f_a$ has $2n+1$ zeros.
        \item if $a_j < a < a_{j+1}$ for some $1 \leq j < N$, then
            \begin{enumerate}
                \item if $n$ is even, then $f_a$ has $2n - 4j + 1$ zeros.
                \item if $n$ is odd, then $f_a$ has $2n - 4j + 3$ zeros.
            \end{enumerate}
        \item if $a_N < a$, then $f_a$ has $1$ zero.
    \end{enumerate}
\end{theorem}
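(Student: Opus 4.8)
The plan is to reduce the full zero count to the winding number of the caustic about the origin, and then to track that winding number as $a$ grows by studying the standard curve underlying the caustic. Write $f_a = h + \overline{g}$ with $h(z) = \frac{a}{n+1}z^{n+1} - \frac{1}{n}z^{-n} - 1$ and $g(z) = \frac{1}{n+1}z^{n+1} - \frac{a}{n}z^{-n}$, so that $h'(z) = az^n + z^{-n-1}$ and $g'(z) = z^n + az^{-n-1}$. A short computation gives the Jacobian $J_{f_a} = |h'|^2 - |g'|^2 = |z|^{-2n-2}(a^2-1)\bigl(|z|^{2(2n+1)} - 1\bigr)$, so the critical set is exactly $|z| = 1$, with $f_a$ sense-preserving for $|z| > 1$ and sense-reversing for $0 < |z| < 1$. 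Parametrizing $z = e^{i\theta}$, the caustic is $\zeta(\theta) = f_a(e^{i\theta}) = aZ(\theta) + \overline{Z(\theta)} - 1$, where $Z(\theta) = \frac{1}{n+1}e^{i(n+1)\theta} - \frac{1}{n}e^{in\theta} = e^{in\theta}\bigl(\frac{1}{n+1}e^{i\theta} - \frac{1}{n}\bigr)$; since $Z \mapsto aZ + \overline{Z}$ scales $\Re$ by $a+1$ and $\Im$ by $a-1$, this exhibits $\zeta$ as the promised non-singular image of the epicycloid $Z$, translated by $-1$, and one reads off the reflection symmetry $\zeta(2\pi - \theta) = \overline{\zeta(\theta)}$.

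Next I would establish the master formula $N = 2n + 1 - 2W$, where $W$ is the winding number of the caustic about $0$. Applying the harmonic analogue of the Argument Principle on the annulus $1 < |z| < R$ (where every zero is sense-preserving) the outer boundary contributes winding $n+1$, from the dominant term $\frac{a}{n+1}z^{n+1}$, giving $N_{\mathrm{out}} = (n+1) - W$; on $\epsilon < |z| < 1$ (where every zero is sense-reversing) the inner boundary contributes winding $n$, from the dominant pole term $-\frac{a}{n}\overline{z}^{-n}$, giving $N_{\mathrm{in}} = n - W$. Adding these yields $N = 2n + 1 - 2W$, valid precisely when the caustic avoids the origin, i.e. when $f_a$ has no zero on $|z| = 1$. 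The theorem thus reduces to showing $W$ climbs from $0$ to $n$ with the stated parity-dependent jumps.

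The winding number is then extracted from the real-axis crossings of the caustic. Because $\Im\zeta = (a-1)\Im Z$ and $a > 1$, these crossings occur exactly at the zeros of $\Im Z$, \emph{independently of $a$}. Writing $Z = \rho e^{i\Psi}$ with $\Psi = \arg Z$, a computation gives $\Psi'(\theta) = \frac{2n+1}{n(n+1)}\cdot\frac{1 - \cos\theta}{\rho(\theta)^2} \ge 0$, so $\Psi$ increases strictly on $(0, 2\pi)$ from $\pi$ to $(2n+1)\pi$ with a single stationary point (the cusp) at $\theta = 0$. Hence the crossings are the points $\theta_k$ with $\Psi(\theta_k) = k\pi$ for $k = 1, \dots, 2n$, where $\Re\zeta(\theta_k) = (a+1)(-1)^k\rho(\theta_k) - 1$. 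The caustic reaches the origin only through an even-$k$ crossing with $(a+1)\rho(\theta_k) = 1$; since $\rho \le \frac{2n+1}{n(n+1)} < \frac12$ for $n \ge 4$, each such equation has a root $a(k) = \rho(\theta_k)^{-1} - 1 > 1$. The even-$k$ crossings in $(0, \pi]$ (with $\theta = \pi \leftrightarrow k = n+1$) number exactly $\lfloor \frac{n+1}{2} \rfloor = N$, and because $\rho$ increases on $(0, \pi]$ while $\theta_k$ increases in $k$, the map $k \mapsto a(k)$ is decreasing, producing distinct ordered critical values $a_1 < \cdots < a_N$.

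Finally I would compute the jump of $W$ at each $a_j$ and assemble. By the reflection symmetry each non-real crossing $\theta_k \in (0, \pi)$ is paired with $2\pi - \theta_k$, and the two members hit the same real-axis point in the same vertical direction, whereas $\theta = 0$ (the cusp, with $X < 0$, never at the origin) and $\theta = \pi$ are self-symmetric. For even $k$ one verifies $\Im\zeta$ is increasing at the crossing, so as $a$ passes $a(k)$ the crossing point(s) migrate from the negative to the positive real axis; each upward crossing that leaves the negative real axis raises $W$ by $1$. A mirror-paired critical value therefore contributes $+2$, while the self-symmetric $\theta = \pi$ crossing—present exactly when $n$ is odd, where $k = n+1$ is even and yields the smallest critical value $a_1$—contributes $+1$. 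Starting from $W = 0$ as $a \to 1^+$ (the caustic degenerates to a real segment) and summing the jumps in order gives $W = 2j$ for $n$ even and $W = 2j - 1$ for $n$ odd on $(a_j, a_{j+1})$, with $W = n$ beyond $a_N$; substituting into $N = 2n + 1 - 2W$ reproduces the three cases. I expect this last step to be the main obstacle: pinning down the signs and multiplicities of the winding jumps—in particular handling the cusp at $\theta = 0$ and justifying the Argument-Principle bookkeeping around the interior anti-analytic pole—so that the parity split emerges correctly.
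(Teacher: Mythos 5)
Your proposal is correct and follows essentially the same strategy as the paper: reduce the count to the winding number of the caustic via the master formula $T_a = 2n+1-2W$, identify the caustic as a non-singular linear image of an epicycloid, and track the even-indexed real-axis crossings through the origin as $a$ grows, with the parity split coming from the single self-symmetric crossing at $\theta=\pi$ versus the mirror-paired double crossings. Your execution is somewhat more explicit in places (the computation of $\Psi'(\theta)$ and the closed form $a(k)=\rho(\theta_k)^{-1}-1$ replace the paper's rolling-circle geometry and Intermediate Value Theorem argument), but the underlying ideas coincide.
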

The following example and figure illustrate how to use Theorem \ref{maintheorem}.
\begin{example}
Consider $n=4$, so that \begin{equation*} \label{examplefunction}
f_a(z) = \frac{a}{5}z^{5} - \frac{1}{4}z^{-4} + \frac{1}{5}\overline{z}^{5} - \frac{a}{4}\overline{z}^{-4} - 1.
\end{equation*}
By Theorem \ref{maintheorem}, there are $N = 2$ critical values of $a$. Figure~\ref{zeros_fig} shows that when \( a = 1.1 \), the function \( f_a \) has \( 2(4) + 1 = 9 \) zeros. Increasing \( a \) to \( a = 1.37 \), where \( a_1 < 1 < a_2 \), reduces the number of zeros to \( 2(4) - 4 + 1 = 5 \). Further increasing \( a \) to \( a = 3.54 \), where \( a_2 < a \), results in just 1 zero.

\end{example}

\begin{figure}[H]
  \subfigure[$a = 1.1$]{
    \includegraphics[scale = 0.28]{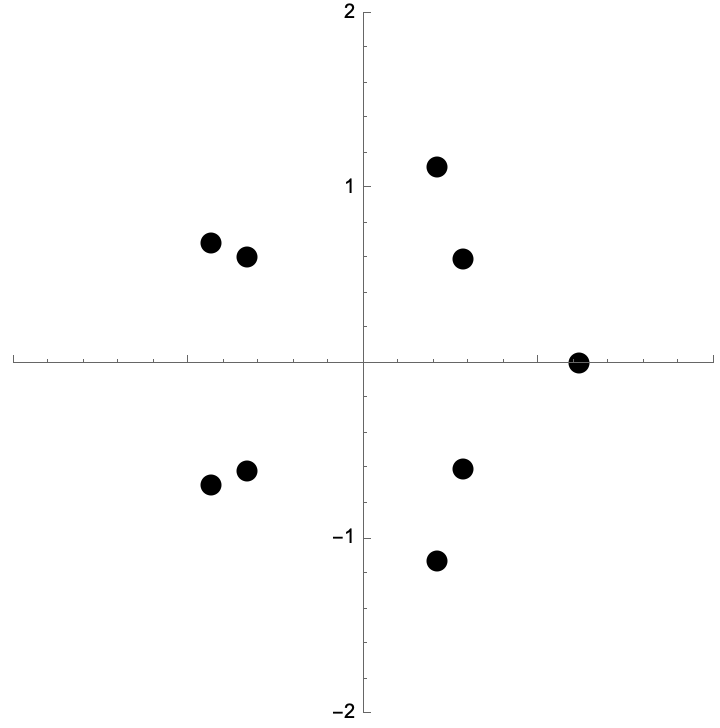}
  }
  \hfill  
  \subfigure[$a = 1.37$]{
    \includegraphics[scale = 0.28]{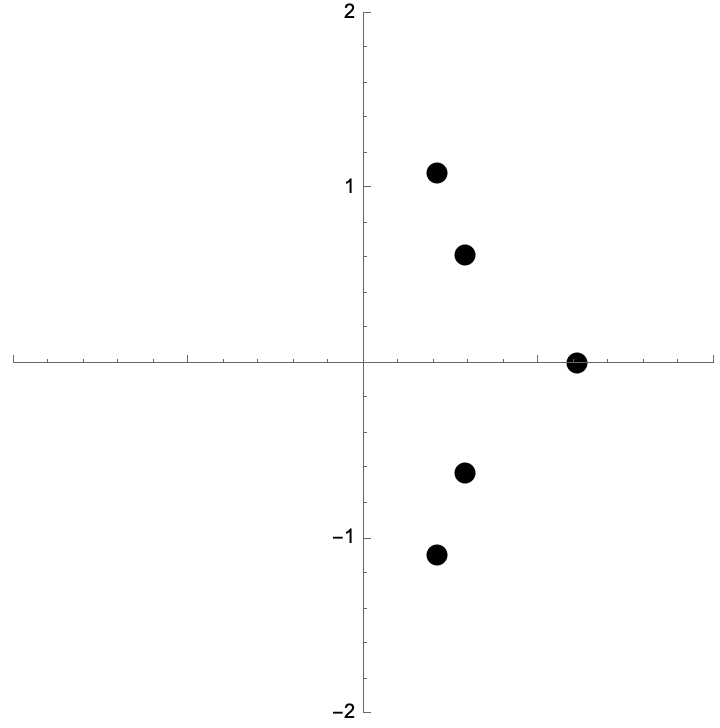}
  }
  \hfill  
  \subfigure[$a = 3.54$]{
    \includegraphics[scale = 0.28]{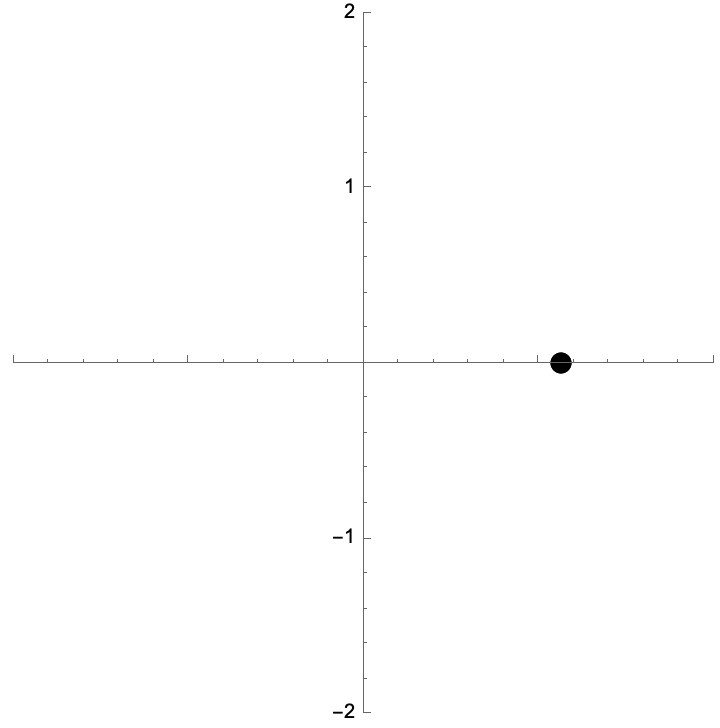}
  }
  \caption{The zeros of $f_a$ for $n=4$. The total number of zeros ranges between $9$ and $1$.}\label{zeros_fig}
\end{figure}

The remainder of this paper is organized as follows: In Section \ref{background}, we provide key definitions and theorems. In Section \ref{ourcriticalcurve}, we identify the parameterization of the image of the critical curve of $f_a$. Section \ref{Formula For Number of Zeros} contains a lemma that relates the total number of zeros to the winding number of the image of the critical curve. Then, in Section \ref{geometry section}, we prove lemmas about the dependence of the winding number of the image of the critical curve on the parameter $a$. Lastly, in Section \ref{endproof}, we use these lemmas to prove Theorem \ref{maintheorem}.

I would like to thank Jennifer Brooks for her support and guidance throughout this project. Portions of Section \ref{background} are adapted from the work of Brooks et al. \cite{undergradbrooks}, which served as a helpful model for this investigation.

\section{Background}\label{background}

A function $\phi: D\subseteq \mathbb{R}^2 \to \mathbb{R}$ is \textit{harmonic} if it is twice continuously differentiable and satisfies Laplace's equation $\phi_{xx} + \phi_{yy} = 0$. A function $f=u+iv: D\subseteq\mathbb{C} \to \mathbb{C}$ is \textit{complex-valued harmonic} if both $u$ and $v$ are harmonic in D. On a simply connected domain $D \subseteq \mathbb{C}$, such an $f$ can be written as $f = h+\overline{g}$ where both $h$ and $g$ are analytic.

Harmonic functions have regions where they are sense-preserving as well regions where they are sense-reversing. From a zero-counting perspective, these regions are important because the sign of the order of a zero depends on the region in which it is located. 
Sense-preserving regions are those where the Jacobian $J_f(z) = |h'(z)|^2 - |g'(z)|^2$, is positive  whereas sense-reversing regions are those where the Jacobian is negative. 

For a given complex-valued harmonic function $f$, the dilatation function of $f$, defined by $\omega(z) = g'(z) / h'(z)$ can be interpreted as a measure of how far $f$ is from being analytic. Because $J_f(z) = |h'(z)|^2 - |g'(z)|^2 = |h'(z)|^2(1 - |\omega(z)|^2)$, when $|\omega(z)|<1$, $f$ is sense-preserving. When $|\omega(z)|=1$, the Jacobian vanishes. This condition characterizes the set of non-isolated critical points, which form what is known as the critical curve.

\begin{definition}
The critical curve of a complex-valued harmonic function $f$ is $\{z \in \mathbb{C} : |\omega(z)| = 1\}$.
\end{definition}

Note that the critical curve separates the sense-preserving and sense-reversing regions of the domain and is thus crucial to a zero-counting strategy. Specifically, the region where $|\omega(z)|<1$ is sense-preserving, while the region where $|\omega(z)|>1$ is sense-reversing. 

The order of a zero of a harmonic function $f=h + 
\overline{g}$ is the exponent of the first non-vanishing term in the expansion of $f$ about the zero. In sense-preserving regions, the order is defined to be positive and in sense-reversing regions, the order is defined to be negative. If the point lies on the critical set, where $J_f(z)=0$, the zero is referred to as singular, and its order is undefined. For a more precise definition, see \cite{Duren_2004}.

A pole of a harmonic function $f$ is a point $z_0$ with the property that $f(z)$ approaches infinity as $z$ approaches $z_0$. Similarly to the order of a zero, the order of a pole is defined as the smallest exponent appearing in the local expansion of \( f \) about the singularity. The sign of the order, as with zeros, reflects the local orientation: if the function is sense-preserving near the pole, the order is taken to be positive; if sense-reversing, the order is negative. We refer the reader to \cite{suffridge2000local} for a more formal definition of these concepts.

Two useful theorems for counting zeros of meromorphic functions are the Argument Principle and Rouché's Theorem. While these theorems do not apply to complex-valued harmonic functions, we fortunately have the harmonic analogue of the Argument Principle proved by Duren \cite{duren1996argument} and extended to harmonic functions with poles Suffridge and Thompson \cite{suffridge2000local}:

\begin{theorem}[Argument Principle for Harmonic Functions with Poles]\label{argument principle}
    Let $f$ be harmonic, except for a finite number of poles, in a simply connected domain $D \subseteq \mathbb{C}.$ Let $\Gamma$ be a simple closed curve contained in D not passing through a pole or a zero. Suppose that $f$ has no singular zeros in D. Let $Z_{f,\Gamma}$ be the sum of the orders of the zeros of $f$ in $\Gamma$, and let $P_{f,\Gamma}$ be the sum of the orders of the poles of $f$ in $\Gamma$. Then

    \begin{align}
        \frac{1}{2\pi}\Delta_{\Gamma}\arg f(z) = Z_{f,\Gamma} - P_{f,\Gamma}.
    \end{align}
\end{theorem}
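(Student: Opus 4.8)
The plan is to interpret $\frac{1}{2\pi}\Delta_\Gamma\arg f$ as the winding number of the image curve $f\circ\Gamma$ about the origin and then to reduce this global quantity to a sum of local contributions, one for each enclosed zero and pole. Since $\Gamma$ passes through no zero or pole, $f\circ\Gamma$ is a closed curve in $\mathbb{C}\setminus\{0\}$, so $\frac{1}{2\pi}\Delta_\Gamma\arg f$ is a well-defined integer. The hypothesis that $f$ has no singular zeros, together with the finiteness of the pole set, guarantees that the zeros and poles enclosed by $\Gamma$ are isolated, non-singular, and finite in number; I label them $z_1,\dots,z_r$. Around each I would place a small positively oriented circle $\gamma_j\subset D$, with the $\gamma_j$ pairwise disjoint and disjoint from $\Gamma$, so that $f$ is continuous and nonvanishing on the closed region $\Omega$ bounded externally by $\Gamma$ and internally by the $\gamma_j$.

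Next I would prove additivity of the change in argument. Because $f$ is merely continuous and not holomorphic, Cauchy's theorem is unavailable, so in its place I would use a cancellation-on-internal-edges argument. Cutting $\Omega$ by a system of crosscuts into finitely many simply connected pieces, on each piece $f$ is nonvanishing and hence admits a continuous single-valued branch of $\arg f$; each crosscut is traversed twice in opposite senses and its two contributions to $\Delta\arg f$ cancel by continuity of that branch across the cut. Summing the oriented boundary contributions of all pieces leaves only the outer and inner boundaries, so that
\[
\Delta_\Gamma\arg f \;=\; \sum_{j=1}^{r}\Delta_{\gamma_j}\arg f .
\]

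The crux is the local computation of $\frac{1}{2\pi}\Delta_{\gamma_j}\arg f$. Here I would invoke the local normal form underlying the Suffridge--Thompson notion of order: near a non-singular point $z_j$ a single leading monomial dominates $f$, of the form $c(z-z_j)^{m}$ if $f$ is sense-preserving at $z_j$ and $c\,\overline{(z-z_j)}^{\,m}$ if it is sense-reversing, where $m>0$ at a zero and $m<0$ at a pole. A Rouché-type comparison—verifying that on a sufficiently small $\gamma_j$ the remainder is strictly dominated in modulus by this monomial—shows that $f\circ\gamma_j$ and the monomial have the same winding number about $0$. As $z$ traverses $\gamma_j$ once counterclockwise, $(z-z_j)^{m}$ contributes $+m$ and $\overline{(z-z_j)}^{\,m}$ contributes $-m$ to $\frac{1}{2\pi}\Delta_{\gamma_j}\arg f$; tracking the orientation sign shows the local winding number equals the signed order of the point at a zero and the negative of the signed order at a pole, exactly as the orientation-dependent sign conventions of the definitions require. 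Summing over $j$ then produces $Z_{f,\Gamma}-P_{f,\Gamma}$.

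I expect the main obstacle to lie in this local step rather than in the global bookkeeping. The delicate points are establishing the dominating-monomial normal form for a harmonic $f=h+\overline{g}$ near an isolated zero or pole and confirming that the remainder is genuinely subordinate on small circles so the Rouché-type comparison is valid; the sense-reversing case, where the orientation reversal must be tracked to yield the correct negative sign, and the pole case, where the expansion carries negative powers of both $z-z_j$ and $\overline{z-z_j}$, require the most care. Once the normal form and its domination estimate are secured, assembling the local indices into $Z_{f,\Gamma}-P_{f,\Gamma}$ is routine, so the bulk of the effort would go into proving that the signed order read off from the leading term of the local expansion coincides with the local winding number.
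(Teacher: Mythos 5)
The paper does not prove this theorem; it is quoted from Duren \cite{duren1996argument} and Suffridge--Thompson \cite{suffridge2000local}, so there is no internal proof to compare against. Your outline is the standard argument from those references: interpret $\frac{1}{2\pi}\Delta_\Gamma \arg f$ as a winding number, use a crosscut decomposition to reduce it to a sum of local winding numbers around the isolated zeros and poles, and compute each local index from a leading-term normal form via a Rouch\'e-type domination estimate. That is the right skeleton, and you correctly locate the real work in the local step. Two refinements are worth making. First, the leading behavior of $f = h + \overline{g}$ near an isolated zero or pole is generically a \emph{pair} of conjugate monomials $c_1(z-z_0)^m + \overline{c_2}\,\overline{(z-z_0)}^{\,m}$ rather than a single one; the comparison still goes through because non-singularity forces $|c_1|\neq|c_2|$, so the subordinate conjugate monomial can be absorbed into the remainder, but the normal form should be stated this way. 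Second, under the paper's definition a zero with $J_f(z_0)=0$ is singular, and any zero at which both $h'$ and $g'$ vanish has $J_f(z_0)=0$; hence the hypothesis of no singular zeros forces every enclosed zero to be simple, with local index $\pm 1$ given by the sign of the Jacobian of the invertible $\mathbb{R}$-linear part. This collapses your general-$m$ analysis at zeros to the linear case, leaving higher $|m|$ only at the poles (as with the order-$n$ pole of $f_a$ at the origin). With those adjustments your sketch fills in to a complete proof along the lines of the cited sources.
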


The quantity $\frac{1}{2\pi}\Delta_{\Gamma}\arg f(z)$ is called the \textit{winding number} of $f(\Gamma)$ about the origin. Throughout this paper, we refer to this quantity as $W_{f,\Gamma}$.

With this Argument Principle for Harmonic Functions, one can prove, in a manner similar to Brown and Churchill \cite{BC09}, the harmonic analogue of Rouché's Theorem.

\begin{theorem}[Rouché's Theorem for Harmonic Functions with Poles]\label{rouches principle}
    Suppose $p$ and $q$ satisfy the hypotheses for the Argument Principle for Harmonic Functions with Poles. If $p$ and $q$ are harmonic inside and on the simple closed contour $\Gamma$, if $|p(z)| > |q(z)|$ at each point on $\Gamma$, and if $p$ and $q$ have no poles on $\Gamma$ and no singular zeros inside $\Gamma$, then $Z_{p,\Gamma}-P_{p,\Gamma} = Z_{p+q,\Gamma}-P_{p+q,\Gamma}$.
\end{theorem}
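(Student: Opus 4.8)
The plan is to mirror the classical derivation of Rouché's theorem from the Argument Principle, replacing the contour integral by the change-of-argument functional $\Delta_\Gamma \arg$ and invoking Theorem \ref{argument principle} only at the very end. First I would record that on $\Gamma$ neither $p$ nor $p+q$ vanishes: since $|p(z)| > |q(z)| \geq 0$ we have $|p(z)| > 0$, and $|p(z)+q(z)| \geq |p(z)| - |q(z)| > 0$ by the reverse triangle inequality. Combined with the hypothesis that $p$ and $q$ have no poles on $\Gamma$, this guarantees that $\Gamma$ passes through neither a zero nor a pole of $p$ or of $p+q$, so that the winding numbers $W_{p,\Gamma}$ and $W_{p+q,\Gamma}$ are both well defined.

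The heart of the argument is the factorization $p + q = p\,(1 + q/p)$, valid pointwise on $\Gamma$ because $p$ does not vanish there. Since the total change of argument of a product of continuous, nonvanishing functions along a curve is the sum of the individual changes — a purely geometric fact about lifts of curves in $\mathbb{C}\setminus\{0\}$ that requires no analyticity or harmonicity — I would write
\[
\Delta_\Gamma \arg (p+q) = \Delta_\Gamma \arg p + \Delta_\Gamma \arg\!\left(1 + \frac{q}{p}\right).
\]
It then remains to show the last term vanishes. For each $z \in \Gamma$, the hypothesis $|p(z)| > |q(z)|$ gives $\left|\frac{q(z)}{p(z)}\right| < 1$, so the image value $1 + q(z)/p(z)$ lies in the open disk of radius $1$ centered at $1$, and hence in the right half-plane. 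As $z$ traverses the closed curve $\Gamma$, the image point $1 + q/p$ stays in this half-plane and never encircles the origin, so $\Delta_\Gamma \arg(1+q/p) = 0$. Dividing through by $2\pi$ yields $W_{p,\Gamma} = W_{p+q,\Gamma}$.

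Finally, I would apply the Argument Principle for Harmonic Functions with Poles to each function separately: applied to $p$ it gives $Z_{p,\Gamma} - P_{p,\Gamma} = W_{p,\Gamma}$, and applied to $p+q$ it gives $Z_{p+q,\Gamma} - P_{p+q,\Gamma} = W_{p+q,\Gamma}$. Together with the equality of winding numbers just established, this is exactly the claimed identity.

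The step demanding the most care — and the main place where the harmonic setting departs from Brown and Churchill — is justifying that Theorem \ref{argument principle} genuinely applies to $p+q$. I must check that $p+q$ is harmonic off finitely many poles (immediate, since poles of a sum can occur only among poles of the summands, and these cannot be created on $\Gamma$ where both are pole-free) and, crucially, that $p+q$ has no singular zeros inside $\Gamma$. This last condition does not follow automatically from the corresponding condition on $p$ and $q$ individually, so it must be read as part of the standing hypotheses; ensuring it holds is the one genuinely harmonic-specific obstacle, whereas the change-of-argument computation carries over verbatim from the analytic case.
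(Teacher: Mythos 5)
Your proof is correct and follows exactly the route the paper intends: the paper gives no explicit proof, deferring to the classical Brown--Churchill argument, and your factorization $p+q = p(1+q/p)$ with the half-plane observation is precisely that argument transplanted to the harmonic setting via Theorem \ref{argument principle}. Your closing remark that the absence of singular zeros of $p+q$ inside $\Gamma$ must be taken as an additional standing hypothesis (it does not follow from the hypotheses on $p$ and $q$ separately) is a legitimate and worthwhile refinement of the statement as written.
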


In Section \ref{Formula For Number of Zeros}, we use Theorems \ref{argument principle} and \ref{rouches principle} to count the zeros, dependent only on $n$ and $W_{f_a,\Gamma}$. In Section \ref{ourcriticalcurve}, we observe that the image of the critical curve is a standard epicycloid that has undergone a transformation. Thus, we review the definition
of an epicycloid.

\begin{definition}\label{defepi}
    An epicycloid centered at the origin is the curve traced out by a fixed point on a circle of radius $r$ rolling on an origin-centered circle of radius $R$. The curve is given by the parametric equations

    \begin{align*}
        x(\theta) = (R+r)\cos\theta - r\cos\left(\frac{R+r}{r}\theta\right) \\
        y(\theta) = (R+r)\sin\theta - r\sin\left(\frac{R+r}{r}\theta\right)
    \end{align*}
\end{definition}

If the fraction $\frac{R}{r}$ is rational and written in lowest terms as $\frac{l}{k}$, then the curve has $l$ cusps and the rolling circle completes $k$ revolutions around the fixed circle in order to close the curve. For brevity, we refer to these revolutions of the rolling circle around the center of the epicycloid simply as revolutions.

\section{Our Critical Curve}\label{ourcriticalcurve}

In this section, we prove that the critical curve of \( f_a \) is the unit circle and that the image of the critical curve is the affine image of a standard epicycloid. We restrict to \( z \neq 0 \) because $f_a$ is not defined there. For such \( z \), we have:
\[
|\omega(z)| = \frac{|z^n + a z^{-n-1}|}{|a z^n + z^{-n-1}|} = \frac{|z^{2n+1} + a|}{|a z^{2n+1} + 1|}.
\]
Letting \( w = z^{2n+1} \), we obtain:
\[
|\omega(z)| = \left| \frac{w + a}{a w + 1} \right|.
\]

This is the modulus of a Möbius transformation applied to \( w \). The set of \( w \in \mathbb{C} \) for which this quantity is equal to 1 is the unit circle $|w| = 1$. Because $|w| = |z^{2n+1}| = |z|^{2n+1}$, this condition is equivalent to $|z|=1$. Therefore, the critical curve of \( f_a \), which we denote by \( \Gamma \), is:
\[
\Gamma = \{ z \in \mathbb{C} : |z| = 1 \} = \{ e^{i\theta} : \theta \in [0, 2\pi] \}.
\]

Note that $f_a$ is sense-reversing inside $\Gamma$ and sense-preserving outside $\Gamma$. We now analyze the image of the critical curve. Observe that $f_a(\Gamma)$ is parameterized by
\begin{align*}
f_a(e^{i\theta}) 
&= \frac{a}{n+1}e^{i(n+1)\theta} - \frac{1}{n}e^{-in\theta} 
   + \frac{1}{n+1}e^{-i(n+1)\theta} - \frac{a}{n}e^{in\theta} - 1 \\
&= \frac{a}{n+1}\big( \cos((n+1)\theta) + i\sin((n+1)\theta) \big) 
   - \frac{1}{n}\big( \cos(-n\theta) + i\sin(-n\theta) \big) \\
&\quad + \frac{1}{n+1}\big( \cos(-(n+1)\theta) + i\sin(-(n+1)\theta) \big) 
   - \frac{a}{n}\big( \cos(n\theta) + i\sin(n\theta) \big) - 1.
\end{align*}
Splitting $f_a(e^{i\theta})$ into real and imaginary parts $u$ and $v$ and substituting $\theta = \frac{\phi}{n}$ gives

\begin{align}\label{imageofcrit1}
u(\phi) = -\left( \frac{a+1}{n}\cos\phi - \frac{a+1}{n+1}\cos\left(\frac{n+1}{n}\phi\right) \right) - 1
\end{align}

\begin{align}\label{imageofcrit2}
v(\phi) = -\left( \frac{a-1}{n}\sin\phi - \frac{a-1}{n+1}\sin\left(\frac{n+1}{n}\phi\right) \right) 
\end{align}
where now $\phi \in [0, 2n\pi]$. Observe,

\begin{align*}
\begin{bmatrix}
u(\phi) \\
v(\phi)
\end{bmatrix}
&=
\begin{bmatrix}
-\left( \frac{a+1}{n}\cos\phi - \frac{a+1}{n+1}\cos(\frac{n+1}{n}\phi) \right) - 1\\
-\left( \frac{a-1}{n}\sin\phi - \frac{a-1}{n+1}\sin(\frac{n+1}{n}\phi) \right) 
\end{bmatrix} \\
&=
\begin{bmatrix}
-1 & 0 \\
0 & \frac{2}{a+1}-1
\end{bmatrix}
\begin{bmatrix}
\frac{a+1}{n} \cos\phi - \frac{a+1}{n+1} \cos(\frac{n+1}{n}\phi) \\
\frac{a+1}{n} \sin\phi - \frac{a+1}{n+1} \sin(\frac{n+1}{n}\phi)
\end{bmatrix} 
 + \begin{bmatrix}
-1 \\
0 
\end{bmatrix}
\end{align*}
If
\begin{align*}
    \boldsymbol{A} = \begin{bmatrix}
-1 & 0 \\
0 & \frac{2}{a+1}-1 
\end{bmatrix} \hspace{1em}
    \boldsymbol{b} = \begin{bmatrix}
-1 \\
0 
\end{bmatrix}
\end{align*}
and
\begin{align}\label{base_epi1}
\tilde{u}(\phi) = \frac{a+1}{n} \cos\phi - \frac{a+1}{n+1} \cos\left(\frac{n+1}{n}\phi\right)
\end{align}

\begin{align}\label{base_epi2}
\tilde{v}(\phi) = \frac{a+1}{n} \sin\phi - \frac{a+1}{n+1} \sin\left(\frac{n+1}{n}\phi\right)
\end{align}
then
\begin{align}\label{affine}
    \begin{bmatrix}
u(\phi) \\
v(\phi)
\end{bmatrix} = \boldsymbol{A}\begin{bmatrix}
\tilde{u}(\phi) \\
\tilde{v}(\phi)
\end{bmatrix} + \boldsymbol{b}.
\end{align}

Therefore, the image of our critical curve is the affine image of a standard epicycloid. This standard epicycloid determines the important geometry of our curve and for the remainder of this paper, we refer to it by equations \eqref{base_epi1} and \eqref{base_epi2}, or simply as $E_a$. See Figure \ref{comparison} for a comparison of $f_a(\Gamma)$ and $E_a$.

\begin{figure}[H]
  \centering
  \subfigure[$f_a(\Gamma)$, with $n=4$, and $a=3$]{
    \raisebox{10mm}{\includegraphics[scale = 0.36]{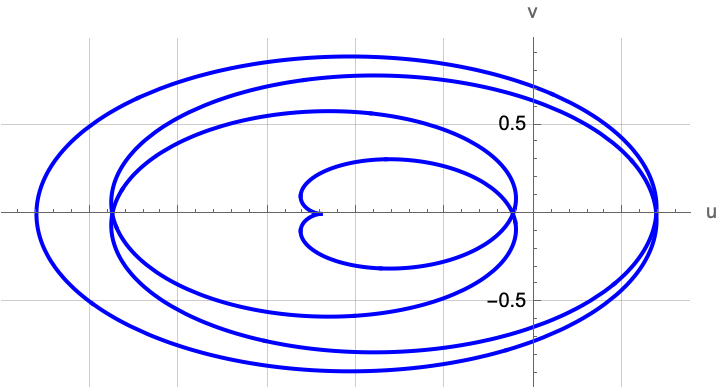}}
  } \hspace{5mm}
  \subfigure[$E_a$, with $n=4$, and $a=3$]{
    \includegraphics[scale = 0.34]{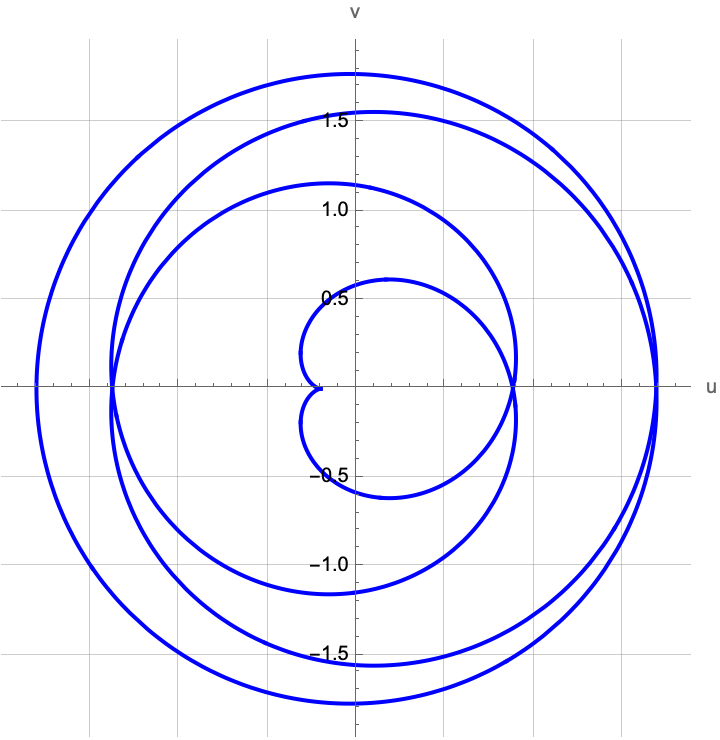}
  }
  \caption{The image $f_a(\Gamma)$ of our critical curve given in \eqref{imageofcrit1} and \eqref{imageofcrit2} is plotted on the left. On the right is the standard epicycloid $E_a$.}
  \label{comparison}
\end{figure}

\begin{remark}
    Note that $f_a(\Gamma)$ is reflected across the imaginary axis, scaled, and shifted compared to the standard epicycloid $E_a$. Because the properties of $f_a(\Gamma)$, namely its winding number, are determined by $E_a$, much of our analysis in Section \ref{geometry section} will consider $E_a$.
\end{remark}

\section{Formula for Number of Zeros}\label{Formula For Number of Zeros}

In this section, we use Theorems \ref{argument principle}, \ref{rouches principle}, and our analysis of the critical curve in Section \ref{ourcriticalcurve} to derive a formula for the total number of zeros of $f_a$.

\begin{lemma}\label{bigcircle}
    Let $W_{f_a,\Gamma}$ denote the winding number of $f_a(\Gamma)$ about the origin. The total number of zeros of $f_a$ in $\mathbb{C}$,  denoted $T_a$, is:
    $$T_a = 2(n-W_{f_a,\Gamma}) + 1.$$
\end{lemma}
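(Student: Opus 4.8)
The plan is to count the zeros of $f_a$ separately in the sense-reversing region (the open unit disk, inside $\Gamma$) and the sense-preserving region (the exterior of $\Gamma$), then recover both counts from two applications of Theorem \ref{argument principle}. Writing $f_a = h + \overline{g}$ with $h(z) = \frac{a}{n+1}z^{n+1} - \frac{1}{n}z^{-n} - 1$ and $g(z) = \frac{1}{n+1}z^{n+1} - \frac{a}{n}z^{-n}$, I first observe that, away from $\Gamma$, every zero is non-singular, so $J_{f_a}\neq 0$ there. At such a point the inverse function theorem makes $f_a$ a local diffeomorphism, so the zero is isolated and has order $+1$ in the sense-preserving region and $-1$ in the sense-reversing region. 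Hence if $k$ denotes the number of zeros inside $\Gamma$ and $m$ the number outside, then $T_a = k + m$, and it remains to pin down $k$ and $m$.

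Next I would compute the two global quantities that feed the Argument Principle: the winding number of $f_a$ on a large circle and the order of the pole at the origin. On a circle $\Gamma_R$ of large radius, the term $p(z) = \frac{a}{n+1}z^{n+1}$ dominates the remaining terms in modulus precisely because $a>1$ (this is where the hypothesis $a>1$ enters), so Theorem \ref{rouches principle} gives $Z_{f_a,\Gamma_R} - P_{f_a,\Gamma_R} = Z_{p,\Gamma_R} - P_{p,\Gamma_R} = n+1$. On a small circle $\Gamma_\varepsilon$ about the origin, the anti-analytic term $-\frac{a}{n}\overline{z}^{-n}$ dominates (again using $a>1$); parametrizing by $z = \varepsilon e^{i\theta}$ shows this monomial winds once around the origin $+n$ times, so the Argument Principle forces the unique pole, at the origin, to have order $-n$.

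Finally I would assemble the two relations. On $\Gamma_R$ the signed zero count is $m - k$ and the pole contributes $-n$, so $m - k - (-n) = n+1$, giving $m - k = 1$. On the unit circle $\Gamma$ itself the only enclosed zeros are the $k$ sense-reversing ones (total order $-k$) together with the pole of order $-n$, so the Argument Principle reads $W_{f_a,\Gamma} = -k - (-n) = n - k$, i.e. $k = n - W_{f_a,\Gamma}$. Combining these, $T_a = k + m = 2k + 1 = 2(n - W_{f_a,\Gamma}) + 1$, as claimed.

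The main obstacle will be the bookkeeping at the origin: I must be careful that the dominant near-origin term is anti-analytic, so that its contribution to the change in argument is $+n$ rather than $-n$, and that the resulting pole order carries the negative sign forced by the sense-reversing region. A secondary point requiring care is verifying the hypotheses of Theorems \ref{argument principle} and \ref{rouches principle}: I must confirm that $f_a$ has no singular zeros, and in particular no zeros on $\Gamma$, so that every order is exactly $\pm 1$ and $\Gamma$ is an admissible contour. This is precisely what lets me identify $T_a$ with the honest count $k + m$ of zero points rather than merely a signed total.
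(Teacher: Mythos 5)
Your proposal is correct and follows essentially the same route as the paper: a Rouch\'e-type comparison with $\frac{a}{n+1}z^{n+1}$ on a large circle to get $Z^{+}-Z^{-}=1$, the Argument Principle on $\Gamma$ itself to get $Z^{-}=n-W_{f_a,\Gamma}$, and the observation that all sense-reversing zeros lie inside $\Gamma$. Your additional details --- the small-circle computation showing the pole at the origin has signed order $-n$, and the remark that $J_{f_a}\neq 0$ off $\Gamma$ forces every zero to have order $\pm 1$ --- merely make explicit steps the paper asserts without proof.
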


\begin{proof}
Let $R$ be sufficiently large so that all zeros of $f_a$ lie in the interior of the circle \(\Gamma_R := \{ z \in \mathbb{C} : |z| = R \}\). Because \( a > 1 \), for \( R \) sufficiently large, \( \frac{a}{n+1}z^{n+1} \) is the dominant term of \( f_a \) on $\Gamma_R$. By a Rouché-type argument,
\[
n+1 = \left(Z_R^{+} - Z_R^{-}\right) - \left(P_R^{+} - P_R^{-}\right)
\]
where \( Z_{R}^{\pm} \) and \( P_{R}^{\pm} \) count with multiplicity the sense-preserving and sense-reversing zeros and poles of \( f_a \) inside \( \Gamma_R \), respectively. Because $f_a$ has only one pole at the origin, which is inside the sense-reversing region, we have \( P_R^{+} = 0 \) and \( P_R^{-} = n \). Thus,
\[
n+1 = Z_R^{+} - Z_R^{-} + n \quad \text{or, equivalently,} \quad 1 = Z_R^{+} - Z_R^{-}.
\]

Now consider the critical curve of $f_a$, defined as \( \Gamma := \{ z \in \mathbb{C} : |z| = 1 \} \). Again, the only pole is at the origin, so \( P_\Gamma^{+} = 0 \) and \( P_\Gamma^{-} = n \). However, because the interior of \( \Gamma \) is sense-reversing, we have \( Z_\Gamma^{+} = 0 \). Thus, by the Argument Principle for Harmonic Functions,
\[
W_{f_a,\Gamma} = -Z_\Gamma^{-} + n.
\]
Additionally, because only the interior of \( \Gamma \) is sense reversing, all zeros of negative order must reside in the interior of $\Gamma$. Thus, \[Z_R^{-} = Z_\Gamma^{-} = n - W_{f_a,\Gamma}\]
Therefore, the total number of zeros is:

\[T_a = Z_R^{+} + Z_R^{-} = 2Z_R^{-} + 1 = 2(n - W_{f_a,\Gamma}) + 1\]
\end{proof}

\section{Investigating $W_{f_a,\Gamma}$}\label{geometry section}

In the last section, we related the total number of zeros of $f_a$ to the winding number $W_{f_a,\Gamma}$ of $f_a(\Gamma)$. In Section \ref{ourcriticalcurve}, we showed that $f_a(\Gamma)$ is the image of a standard epicycloid whose geometry is determined by $n$ and $a$. In this section, we investigate how $W_{f_a,\Gamma}$ depends on $n$, $a$, and $f_a(\Gamma)$.

\subsection{Basic Geometry of $\boldsymbol{E_a}$
}\label{basicgeo}

First, we prove a lemma about the geometry of $f_a(\Gamma)$. 

\begin{lemma}\label{basicgeo}
Let the curve $f_a(\Gamma)$ be parameterized by equations \eqref{imageofcrit1} and \eqref{imageofcrit2}. Then $f_a(\Gamma)$
\begin{enumerate}
    \item has exactly one cusp.
    \item completes $n$ full revolutions about its center before terminating
    \item is traced in the counterclockwise direction.
\end{enumerate}
\end{lemma}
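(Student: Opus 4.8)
The plan is to analyze the standard epicycloid $E_a$ given by \eqref{base_epi1} and \eqref{base_epi2}, since by \eqref{affine} the curve $f_a(\Gamma)$ is obtained from $E_a$ by the invertible affine map $\boldsymbol{z} \mapsto \boldsymbol{A}\boldsymbol{z} + \boldsymbol{b}$, and such a map preserves the cusp count, the number of revolutions, and (since $\det \boldsymbol{A} = -(\tfrac{2}{a+1}-1) > 0$ for $a > 1$) the orientation. First I would read off the ratio $\tfrac{R}{r}$ for $E_a$: comparing \eqref{base_epi1} and \eqref{base_epi2} with Definition \ref{defepi}, I would match $r = \tfrac{a+1}{n+1}$ and $R + r = \tfrac{a+1}{n}$, so that $R = \tfrac{a+1}{n} - \tfrac{a+1}{n+1} = \tfrac{a+1}{n(n+1)}$ and the frequency $\tfrac{R+r}{r} = \tfrac{n+1}{n}$ matches the argument in the second cosine/sine term. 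This gives $\tfrac{R}{r} = \tfrac{1}{n}$, already in lowest terms.

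Next I would apply the remark immediately following Definition \ref{defepi}: writing $\tfrac{R}{r} = \tfrac{l}{k}$ in lowest terms with $l = 1$ and $k = n$ yields $l = 1$ cusp and $k = n$ revolutions of the rolling circle before the curve closes, establishing parts (1) and (2) for $E_a$. To confirm the parameter range is correct and that the curve indeed closes exactly once over $\phi \in [0, 2n\pi]$, I would check that both $\tilde{u}(\phi)$ and $\tilde{v}(\phi)$ have period $2n\pi$ (since $\cos\phi, \sin\phi$ have period $2\pi$ and $\cos(\tfrac{n+1}{n}\phi), \sin(\tfrac{n+1}{n}\phi)$ have period $\tfrac{2n\pi}{n+1}$, whose least common period with $2\pi$ is $2n\pi$), confirming the stated domain traces the closed curve exactly once.

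For orientation in part (3), I would compute the signed direction of traversal of $E_a$, most cleanly by evaluating the tangent or the derivative of the argument at a convenient point such as the cusp. Alternatively, I would observe that a standard epicycloid traced with increasing $\theta$ in Definition \ref{defepi} is counterclockwise, and that $E_a$ has exactly this form (with $\phi$ playing the role of the scaled angle), so increasing $\phi$ traces $E_a$ counterclockwise. Then, because the affine map has $\det \boldsymbol{A} > 0$, the orientation is preserved and $f_a(\Gamma)$ is likewise counterclockwise.

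The main obstacle I anticipate is the single-cusp claim, which is slightly counterintuitive: the reader's intuition may expect more cusps, but the key is that $\tfrac{R}{r} = \tfrac{1}{n}$ is less than $1$, so the rolling circle is \emph{larger} than the fixed circle and the curve has only $l = 1$ cusp while winding $k = n$ times. I would make sure to verify the cusp location directly by solving $\tilde{u}'(\phi) = \tilde{v}'(\phi) = 0$, confirming there is exactly one value of $\phi$ in $[0, 2n\pi)$ where the velocity vanishes; this both pins down part (1) rigorously and identifies a natural point at which to check the orientation for part (3).
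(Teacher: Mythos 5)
Your proposal is correct and follows essentially the same route as the paper: identify $r = \tfrac{a+1}{n+1}$ and $R = \tfrac{a+1}{n(n+1)}$ by matching against Definition~\ref{defepi}, conclude from $\tfrac{R}{r} = \tfrac{1}{n}$ that there is one cusp and $n$ revolutions, and use $\det\boldsymbol{A} > 0$ (valid since $a>1$) to transfer these properties and the counterclockwise orientation to $f_a(\Gamma)$. The extra checks you propose (the $2n\pi$ period and the direct verification of the cusp via $\tilde{u}'(\phi)=\tilde{v}'(\phi)=0$) are sound additions but not part of the paper's argument.
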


\begin{proof}

    By comparing equations \eqref{base_epi1} and \eqref{base_epi2} and Definition \ref{defepi}, one can show that the radii associated with the standard epicycloid $E_a$ are
    \begin{align}\label{radii}
        R = \frac{a+1}{n(n+1)} \hspace{1em}\text{and} \hspace{1em} r = \frac{a+1}{n+1}.
    \end{align}
Therefore, $\frac{R}{r} = \frac{1}{n}$. Thus, $E_a$ has 1 cusp, and the rolling circle completes $n$ revolutions before closing the curve. Because the transformation in equation \eqref{affine} is affine, $f_a(\Gamma)$ has the same properties.

Recall that $E_a$ is a standard epicycloid and is thus traced in the counterclockwise direction. Because $a >1$, we have $\frac{2}{a+1}-1 < 0$. Therefore, $\det({\boldsymbol{A})} > 0$ and $\boldsymbol{A}$ is orientation preserving. Thus, $f_a(\Gamma)$ is also traced out in the counterclockwise direction.
    
\end{proof}

\begin{remark}
    We note that if $a<1$, then $f_a(\Gamma)$ is traced out clockwise, which makes $W_{f_a,\Gamma}$ negative. For more information on this case, see Section \ref{small a section}.
\end{remark}

\subsection{Lemmas on the Winding Number} 

We now establish bounds on the parameter \( a \) that determine when the winding number \( W_{f_a, \Gamma} \) is equal to 0 or \( n \). These bounds define intervals in which the origin is guaranteed to lie completely outside or completely inside the curve \( f_a(\Gamma) \), respectively. Because an epicycloid is defined as the curve traced out by a circle rolling around another fixed circle, the curve must live in an annulus. We define the annulus in which the standard epicycloid $E_a$ is contained as 
\begin{align}
    \mathcal{C} = \{z \in \mathbb{C} : R \leq |z| \leq R+2r\} \label{annulus}
\end{align}
where $R$ is the radius of the fixed circle and $r$ the radius of the rolling circle denoted in \eqref{radii}. Using this annulus, we prove Lemmas \ref{outside} and \ref{inside}.

\begin{lemma}\label{outside}
For the curve $f_a(\Gamma)$ parameterized by \eqref{imageofcrit1} and \eqref{imageofcrit2}, there exists $a' > 1$ such that for all $1<a<a'$, the origin is completely outside of $f_a(\Gamma)$ so that $W_{f_a, \Gamma} = 0$.
\end{lemma}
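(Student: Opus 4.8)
The plan is to show that for $a$ sufficiently close to $1$ the entire curve $f_a(\Gamma)$ is trapped in the open left half-plane $\{u < 0\}$. Since that half-plane is simply connected and does not contain the origin, any closed curve lying inside it has winding number $0$ about the origin, and this holds no matter how many times the epicycloid image self-intersects or revolves; hence $W_{f_a,\Gamma}=0$ follows immediately. So the whole task reduces to a uniform (in $\phi$) sign bound on the first coordinate.

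To produce that bound I would start from the annulus $\mathcal{C}$ in \eqref{annulus}: every point $(\tilde u(\phi),\tilde v(\phi))$ of the standard epicycloid $E_a$ satisfies $\sqrt{\tilde u(\phi)^2+\tilde v(\phi)^2}\le R+2r$, so in particular $|\tilde u(\phi)|\le R+2r$ for all $\phi$ (a direct triangle-inequality estimate on \eqref{base_epi1} gives the same thing). Reading the first row of the affine relation \eqref{affine} gives $u(\phi) = -\tilde u(\phi) - 1$, so that $u(\phi) \le (R+2r)-1$ for every $\phi$. Using \eqref{radii} to compute $R+2r = \frac{(a+1)(2n+1)}{n(n+1)}$, the requirement $R+2r<1$ is exactly $a < \frac{n^2-n-1}{2n+1} =: a'$, and for all such $a$ we get $u(\phi)<0$ uniformly.

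The remaining point is to confirm the interval $(1,a')$ is nonempty, i.e.\ that $a'>1$; this is equivalent to $n^2-3n-2>0$, which holds for every $n\ge 4$, so the stated $a'$ works. The estimates here are routine, and the genuine crux is the topological step rather than the algebra: one must observe that the reflection $\tilde u \mapsto -\tilde u$ together with the shift by $-1$ is precisely what pushes the whole image strictly to the left of the imaginary axis, after which the origin sits in the unbounded complementary component and $\Delta_\Gamma \arg f_a(z) = 0$, giving $W_{f_a,\Gamma}=0$.
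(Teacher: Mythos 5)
Your proof is correct and takes essentially the same approach as the paper: both derive the same threshold $a' = \frac{n(n+1)}{2n+1}-1$ from the condition $R+2r<1$ and conclude that the reflected, translated curve cannot enclose the origin. The only cosmetic difference is that you trap the whole curve in the half-plane $\{u<0\}$, whereas the paper notes that the origin corresponds to the point $z=1$ lying outside the annulus $\mathcal{C}$; both immediately give $W_{f_a,\Gamma}=0$.
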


\begin{proof}
    Consider \( a' = \frac{n(n+1)}{2n+1} - 1 \). A quick calculation shows that \( a' > 1 \) when \( n \geq 4 \), since
\[
\frac{n(n+1)}{2n+1} > 2 \quad \Longleftrightarrow \quad n^2 - 3n - 2 > 0,
\]
which holds for all \( n \geq 4 \). If $R = \frac{a+1}{n(n+1}$ and $r =\frac{a+1}{n}$, then
\begin{align*}
    R+2r = \frac{a+1}{n(n+1)} + 2\left(\frac{a+1}{n}\right) \\
    = (a+1)\left( \frac{1}{n(n+1)} + \frac{2n}{n(n+1)}\right) \\ 
    = (a+1)\frac{2n+1}{n(n+1)}
\end{align*}
which is less than $1$ if and only if 
$$a < \frac{n(n+1)}{2n+1} - 1 = a'$$

Thus, if $1<a<a'$, then $R + 2r < 1$. Because $E_a$ is a standard epicycloid centered at $z=0$, we see that $z=1$ lies \textit{outside} of the annulus $\mathcal{C}$. The matrix $\boldsymbol{A}$ reflects $E_a$ across the real axis but does not stretch it in this direction. Therefore, adding $\boldsymbol{b} = \begin{bmatrix}
-1 \\
0 
\end{bmatrix}$ results in the point $z=0$ lying fully outside the curve $f_a(\Gamma)$. Hence, $W_{f_a,\Gamma} = 0$ 
    \end{proof}

\begin{lemma}\label{inside}
    For the curve $f_a(\Gamma)$ parameterized by \eqref{imageofcrit1} and \eqref{imageofcrit2}, there exists $a'>1$ such that for all $a > a'$, $W_{f_a, \Gamma} = n$. 
\end{lemma}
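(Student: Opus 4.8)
The plan is to reduce the computation of $W_{f_a,\Gamma}$ to a winding number of the \emph{standard} epicycloid $E_a$ about a single fixed point, and then to show that for large $a$ that point sits inside the inner hole of $E_a$, where the winding number is maximal. Identifying points of the plane with complex numbers, the affine relation \eqref{affine} says $f_a(\Gamma)$ is the curve $\phi \mapsto \boldsymbol{A}(\tilde u(\phi),\tilde v(\phi))^{T} + \boldsymbol{b}$. Since $\boldsymbol{A}\mathbf{p} + \boldsymbol{b} = \boldsymbol{A}(\mathbf{p} - \mathbf{p}_0)$ with $\mathbf{p}_0 = -\boldsymbol{A}^{-1}\boldsymbol{b}$, and since $\boldsymbol{A}$ is an orientation-preserving linear isomorphism fixing the origin (as shown in the proof of Lemma \ref{basicgeo}), applying $\boldsymbol{A}$ leaves winding numbers about $0$ unchanged. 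Hence $W_{f_a,\Gamma}$ equals the winding number of $E_a$ about $\mathbf{p}_0$. A direct computation gives $\boldsymbol{A}^{-1}\boldsymbol{b} = (1,0)^{T}$, so $\mathbf{p}_0 = (-1,0)$; the first step is therefore to establish that $W_{f_a,\Gamma}$ is the winding number of $E_a$ about $(-1,0)$.

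The second step is to show that $E_a$ winds exactly $n$ times about every point of the open inner disk $\{|z| < R\}$. Writing $E_a$ as the complex curve $(R+r)e^{i\phi} - r\,e^{i\frac{n+1}{n}\phi}$ for $\phi \in [0,2n\pi]$ and reading off from \eqref{radii} that $R+r = \frac{a+1}{n} > \frac{a+1}{n+1} = r$, a Rouch\'e-type argument for winding numbers shows the winding of $E_a$ about its center equals that of the dominant term $(R+r)e^{i\phi}$, namely $n$; this is exactly statement (2) of Lemma \ref{basicgeo}. Because $E_a$ is contained in the annulus $\mathcal{C}$ of \eqref{annulus}, the open disk $\{|z| < R\}$ is disjoint from $E_a$ and connected, so the winding number is constant on it and equal to its value at the center, namely $n$.

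The final step is to force $\mathbf{p}_0 = (-1,0)$ into this inner disk. Since $|(-1,0)| = 1$, I need $R = \frac{a+1}{n(n+1)} > 1$, i.e. $a > n(n+1) - 1 =: a'$, and one checks $a' > 1$ for $n \ge 4$. For any $a > a'$ we then have $\mathbf{p}_0 \in \{|z| < R\}$, so the winding of $E_a$ about $\mathbf{p}_0$ is $n$, and therefore $W_{f_a,\Gamma} = n$, completing the proof.

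I expect the main obstacle to be the bookkeeping in the first step: correctly converting ``winding of $f_a(\Gamma)$ about $0$'' into ``winding of $E_a$ about $\mathbf{p}_0$'' requires care that $\boldsymbol{A}$ is orientation-preserving (so the winding number is preserved rather than negated) and that the translate is taken about the preimage point $\mathbf{p}_0 = -\boldsymbol{A}^{-1}\boldsymbol{b}$ rather than about $\boldsymbol{b}$ itself. The Rouch\'e-for-winding-numbers step is standard but should be phrased as a homotopy argument, since the straight-line homotopy from $E_a$ to $(R+r)e^{i\phi}$ avoids the origin precisely because $|{-r\,e^{i\frac{n+1}{n}\phi}}| = r < R+r$; one should also note that $E_a$ meets the inner circle only at its single cusp, which lies on $|z| = R$ and not in the open disk, so that the disk is genuinely disjoint from the curve.
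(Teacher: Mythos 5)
Your proposal is correct and follows essentially the same route as the paper: the same threshold $a' = n(n+1)-1$ forcing $R > 1$, the same reduction to an affine preimage of the origin lying in the inner hole of the annulus $\mathcal{C}$, and the same appeal to the $n$ counterclockwise revolutions from Lemma \ref{basicgeo}. You additionally make rigorous (via the straight-line homotopy and local constancy of the winding number on the connected inner disk) the step the paper only asserts, and you correctly pin down the relevant point as $-\boldsymbol{A}^{-1}\boldsymbol{b} = (-1,0)$ rather than the paper's $z=1$ --- an immaterial difference given the rotational symmetry of $\mathcal{C}$.
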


\begin{proof}
    Consider $a' = n(n+1) - 1$. Note that $a' >1$ because $n \in \mathbb{N}$. Thus, if $a>a'$, 
    $$R = \frac{a+1}{n(n+1)} > \frac{a'+1}{n(n+1)} = 1$$. 
    Because $E_a$ is a standard epicycloid centered at $z=0$, we see that $z=1$ is within the inner disc of the annulus $A_{\Gamma}$. The matrix $\boldsymbol{A}$ reflects $E_a$ across the real axis but does not stretch it. Therefore, adding $\boldsymbol{b} = \begin{bmatrix}
-1 \\
0 
\end{bmatrix}$ results in the point $z=0$ lying fully within the curve $f_a(\Gamma)$. By Lemma \ref{basicgeo}, $f_a(\Gamma)$ is traced counterclockwise and has $n$ revolutions. Thus, $W_{f_a, \Gamma} = n$.
\end{proof}

Note that Lemmas \ref{outside} and \ref{inside} do not prove the existence of critical values of $a$ at which the winding number $W_{f_a,\Gamma}$ changes, but rather establish intervals in which $W_{f_a,\Gamma}$ is guaranteed to be $0$ and $n$. In Section \ref{endproof}, we use these lemmas together with the Intermediate Value Theorem to prove the existence of critical values of $a$. 

In the next subsection, we count the intersections of $f_a(\Gamma)$ with the real axis that occur on the right side of the curve.

\subsection{Counting Intersections with the Real Axis}

The curve $f_a(\Gamma)$ is centered at $z = -1$ and expands as $a$ increases to eventually fully envelope the origin. Each intersection of $f_a(\Gamma)$ with the real axis that occurs on its right side increases with $a$, eventually passing the origin and coinciding with an increase in the winding number. We prove the following lemma about how many of these unique intersections with the real axis exist.

\begin{lemma}\label{numberofuniqs}
    The curve $f_a(\Gamma)$ parameterized by \eqref{imageofcrit1} and \eqref{imageofcrit2} has $\lfloor\frac{n+1}{2}\rfloor$ unique intersections with the real axis on the right side of its center. Of these unique intersections,

    \begin{enumerate}
        \item if $n$ is even, then each is a double intersection, and
        \item if $n$ is odd, then each is a double intersection except for the outermost, which is a single intersection.
    \end{enumerate}
\end{lemma}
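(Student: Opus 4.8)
The plan is to analyze the intersections of $f_a(\Gamma)$ with the real axis directly through the parametrization. Since $\boldsymbol{A}$ and $\boldsymbol{b}$ preserve the real axis (reflecting and scaling but keeping $v=0$ fixed to $v=0$), the intersections of $f_a(\Gamma)$ with the horizontal axis occur precisely at the parameter values $\phi$ where $v(\phi) = 0$, equivalently where $\tilde{v}(\phi) = 0$. So first I would set $\tilde{v}(\phi) = \frac{a+1}{n}\sin\phi - \frac{a+1}{n+1}\sin\left(\frac{n+1}{n}\phi\right) = 0$ and solve for the relevant $\phi \in [0, 2n\pi)$. Pulling out the common factor $\frac{a+1}{n(n+1)}$, this reduces to $(n+1)\sin\phi = n\sin\left(\frac{n+1}{n}\phi\right)$, an equation independent of $a$; this is the key simplification, since it means the \emph{set} of intersection parameters (and hence which intersections are ``right side'' versus ``left side'') is fixed once $n$ is chosen. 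The obvious solutions are $\phi = kn\pi$ for integers $k$ (where both sines vanish), giving the cusp and the ``axis-crossing'' points.

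Next I would identify which of these parameter values land on the \emph{right} side of the center and determine their multiplicity as geometric intersection points. The symmetry of the epicycloid about the real axis is the main organizing tool: $E_a$ (and hence $f_a(\Gamma)$) is symmetric under $\phi \mapsto -\phi$, i.e. $\tilde u(-\phi) = \tilde u(\phi)$ and $\tilde v(-\phi) = -\tilde v(\phi)$, so axis crossings come in symmetric pairs $\{\phi, 2n\pi - \phi\}$ that map to the \emph{same} point on the real axis — this accounts for the ``double intersections'' in the statement. The plan is to enumerate the distinct real-axis points $\tilde u(\phi)$ as $\phi$ ranges over one symmetric half of $[0,2n\pi)$, count how many have positive first coordinate relative to the center (the right side), and verify via the symmetry which collapse to coincident points. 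I expect the count of right-side crossings to come out to roughly $\lfloor \frac{n+1}{2}\rfloor$ after accounting for the pairing, with the parity of $n$ controlling whether the outermost crossing sits exactly on the axis of symmetry (an odd solution $\phi = n\pi$, the point diametrically opposite the cusp) and is therefore a single unpaired intersection rather than a double one.

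The main obstacle will be the careful bookkeeping of multiplicities and sides: I need to show that (i) every symmetric pair $\{\phi, 2n\pi-\phi\}$ genuinely maps to one coincident point and that distinct pairs map to distinct points (no accidental extra coincidences among the $\tilde u$ values), and (ii) correctly classify each crossing as left or right of center by tracking the sign of $\tilde u(\phi)$. For the parity split I would examine the solution $\phi = n\pi$ separately: when $n$ is odd this is a genuine crossing point that is its own mirror image (fixed by $\phi \mapsto 2n\pi - \phi$), hence a single intersection, whereas when $n$ is even the endpoint structure pairs up cleanly and all right-side crossings are double. I would confirm that these special solutions $\phi = n\pi$ and the cusp $\phi = 2n\pi$ (equivalently $\phi=0$) lie on the correct sides by evaluating $\tilde u$ there, and then simply count the surviving distinct right-side points to obtain $\lfloor\frac{n+1}{2}\rfloor$. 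The potential subtlety is ruling out non-cusp solutions of $(n+1)\sin\phi = n\sin\left(\frac{n+1}{n}\phi\right)$ beyond $\phi = kn\pi$; I would argue via a monotonicity or sign-change count on the interval between consecutive multiples of $n\pi$ that no additional real-axis crossings are introduced, so that the $\phi = kn\pi$ values account for all intersections.
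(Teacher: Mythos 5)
Your setup is sound and partly parallels the paper's: reducing to $\tilde{v}(\phi)=0$, exploiting the symmetry $\phi\mapsto 2n\pi-\phi$ to pair crossings into double points, and isolating $\phi=n\pi$ as the self-paired crossing whose side depends on the parity of $n$ are all the right moves. The fatal step is the claim that the solutions $\phi=kn\pi$ (where both sines vanish) ``account for all intersections.'' They do not, and no monotonicity argument can rule the others out, because they are genuinely present: $E_a$ winds $n$ times about its center, so its geometric argument increases from $0$ to $2n\pi$ and passes through every multiple of $\pi$, producing on the order of $2n$ real-axis crossings, of which only $\phi=0,\,n\pi,\,2n\pi$ have your special form. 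Concretely, for $n=4$ the function $\frac14\sin\phi-\frac15\sin\bigl(\frac54\phi\bigr)$ is positive at $\phi=\pi$ and negative at $\phi=\frac{3\pi}{2}$, so there is a crossing in $\bigl(\pi,\frac{3\pi}{2}\bigr)$. Worse, the crossings you discard are exactly the ones the lemma counts: the right-side intersections of $f_a(\Gamma)$ correspond to the left-side crossings of $E_a$, i.e.\ the parameter values where the argument of $E_a(\phi)$ is an odd multiple of $\pi$. Keeping only $\phi=kn\pi$ you would find $\tilde{u}(0)=R>0$ and $\tilde{u}(n\pi)=(-1)^n(R+2r)$, hence \emph{zero} right-side intersections of $f_a(\Gamma)$ when $n$ is even, contradicting the statement rather than proving it.

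The repair is essentially the paper's argument. Write $E_a(\phi)=e^{i\phi}\bigl(\frac{a+1}{n}-\frac{a+1}{n+1}e^{i\phi/n}\bigr)$; the second factor stays in the right half-plane, so the argument of $E_a(\phi)$ increases from $0$ to $2n\pi$ and the negative real axis of $E_a$ is crossed precisely when that argument equals $\pi,3\pi,\dots,(2n-1)\pi$. Exactly $\lfloor\frac{n+1}{2}\rfloor$ of these odd multiples lie in the half-interval $[0,n\pi]$, on which the radial distance is strictly increasing, so these crossings land at distinct points; your symmetry pairing then finishes the argument, with the $\phi=n\pi$ crossing unpaired (a single intersection) precisely when $n$ is odd.
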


\begin{proof}
    This proof relies on the geometric behavior of the standard epicycloid $E_a$, which is the curve traced by a fixed point on a circle of radius $r$ rolling around a fixed circle of radius $R$. As the rolling circle completes its motion, the geometric argument of points on $E_a$ increases monotonically.
    
    By Lemma \ref{basicgeo}, the epicycloid $E_a$ is traced counterclockwise, has a single cusp, and completes $n$ revolutions. Because the curve has only one cusp, the radial distance from the origin increases monotonically during the first half of the parameter interval, reaches a maximum distance at $\phi = \pi n $, then decreases monotonically during the second half. Due to this symmetry, each intersection with the real axis is a \textit{double} intersection and is achieved twice through the parameter interval, except for the point at $\phi = \pi n$, which is passed through only \textit{once}. Substituting this point into the parameterization, we see
    \begin{align*}
        E_a(\pi n) = (-1)^n(R + 2r).
    \end{align*}
    Therefore, if $n$ is even, $E_a(\pi n) = R + 2r$, and this unique point of maximal distance is on the right side of the origin. If $n$ is odd, then $E_a(\pi n) = -(R + 2r)$, and the point lies to the left of the origin.

    Each intersection of $E_a$ with the real axis on its left side occurs when the geometric argument is an odd multiple of $\pi$ and is reached at least once within the first half of the parameter interval $\phi \in [0,\pi n]$. Therefore, the number of distinct intersections on the left side is equal to the number of odd integers less than or equal to $n$, which is $\lfloor \frac{n+1}{2} \rfloor$.
    
    Lastly, recall that the matrix \( \boldsymbol{A} \) reflects \( E_a \) across the imaginary axis. As a result, the curve \( f_a(\Gamma) \) has \( \left\lfloor \frac{n+1}{2} \right\rfloor \) distinct intersections with the real axis on its \textit{right} side. If \( n \) is even, each of these is a double intersection. If \( n \) is odd, all but the outermost point are double intersections, with the outermost point being the single intersection corresponding to the unique point of maximal distance from the center.

\end{proof}

\section{Completing the Proof of Theorem 1}\label{endproof}

Let $f_a$ be of the form \eqref{ourfamily}. By Lemma \ref{numberofuniqs}, $f_a(\Gamma)$ has $N = \lfloor\frac{n+1}{2}\rfloor$ unique intersections with the real axis on its right side. Because the standard epicycloid $E_a$ depends smoothly on $a$ and because $f_a(\Gamma)$ is the affine image of $E_a$, these intersections also depend smoothly on $a$. We may therefore label these intersections of $f_a(\Gamma)$ as real-valued continuous functions $x_j(a)$, where each $x_j(a)$ gives the location of the $j$-th intersection with the real axis on the right side of $f_a(\Gamma)$.

Recall that the standard epicycloid is contained in the annulus $\mathcal{C} = \{z \in \mathbb{C} :R \leq z \leq R+2r\}$ where    
\begin{align*}\label{radii}
        R = \frac{a+1}{n(n+1)} \hspace{1em}\text{and} \hspace{1em} r = \frac{a+1}{n+1}
\end{align*}

Because \( \mathcal{C} \) expands monotonically with \( a \), each real-axis intersection \( x_j(a) \) increases monotonically with \( a \). By Lemma~\ref{outside}, there exists \( a \), slightly greater than 1, such that the winding number is zero and we have \( x_j(a) < 0 \) for all \( j = 1, \dots, N \). By Lemma~\ref{inside}, for sufficiently large \( a \), the winding number is $n$ and we have \( x_j(a) > 0 \) for all \( j \). By the Intermediate Value Theorem, there exists a unique \( a_j \) such that \( x_j(a_j) = 0 \). That is, the \( j \)-th intersection of \( f_a(\Gamma) \) with the real axis passes through the origin at the critical value \(a_j \). We are now ready to count zeros in several cases:

\noindent
\textbf{Case 1:} $1 < a<a_1$. \\
\indent In this case, no intersections pass over the origin and $W_{f_a, \Gamma} = 0$. By Lemma~\ref{bigcircle}, the total number of zeros is 
\begin{align*}
    T_a = 2(n-0) + 1 = 2n+1.
\end{align*}
\textbf{Case 2:} $a_j < a< a_{j+1}$. \\
\indent In this case, $j$ points of intersection are to the right of the origin.

\noindent
\textbf{Subcase 2a:} $n$ is even. \\
\indent By Lemma \ref{numberofuniqs}, each intersection is a double intersection. Thus, each crossing of a double intersection with the origin corresponds to $W_{f_a,\Gamma}$ increasing by $2$. Therefore, $W_{f_a,\Gamma} = 2j$ and by Lemma~\ref{bigcircle}, the total number of zeros is 
\begin{align*}
    T_a = 2(n-2j) + 1 = 2n-4j +1.
\end{align*}
\textbf{Subcase 2b:} $n$ is odd. \\
\indent By Lemma \ref{numberofuniqs}, each intersection is a double intersection except for the outermost which is a single intersection. Therefore, $W_{f_a,\Gamma} = 2j -1$ and by Lemma~\ref{bigcircle}, the total number of zeros is  
\begin{align*}
    T_a = 2\left(n - (2j-1)\right) + 1 = 2n - 4j + 3.
\end{align*}
\textbf{Case 3:} $a>a_N$. \\
\indent In this case, all $N$ intersections are to the right of the origin and $W_{f_a, \Gamma} = n$. By Lemma~\ref{bigcircle}, the total number of zeros is 
\begin{align*}
    T_a = 2(n-n) + 1 = 1.
\end{align*}

\section{The Case $0<a<1$}\label{small a section}

Although our main analysis focuses on the case \( a > 1 \), the critical curve and its image remain the same for all \( a \neq 0 \). Despite the caustic tracing out clockwise when $a<1$ and negating the winding number, one could still easily apply Theorems \ref{argument principle} and \ref{rouches principle}. In this paper, however, we do not consider the case $0<a<1$ largely due to the lack of interesting behavior. To briefly illustrate, we claim that 
\[
R +2r = \frac{(a+1)(2n+1)}{n(n+1)} < 1
\quad \text{for } n \geq 4 \text{ and } 0 < a < 1.
\]
Because \( 0 < a < 1 \), we have \( 1 < a + 1 < 2 \), so it suffices to show
\[
\frac{2(2n+1)}{n(n+1)} < 1.
\]
One can easily verify that this holds for all \( n \geq 4 \), as it reduces to the inequality \( n^2 - 3n - 2 > 0 \), which is true in that range. Thus, the point $z=1$ never resides in the annulus $\mathcal{C}$. Therefore, the origin is always outside of the curve $f_a({\Gamma})$ and the number of zeros of $f_a$ never changes.

\section{Areas for further investigation}

This paper focuses on the case \( a \in \mathbb{R} \cap (1, \infty) \), but several directions for future research remain. One avenue is to explore what happens when \( a \) is complex, that is, \( a \in \mathbb{C} \setminus \{-1,1\} \), and whether the caustic remains analyzable. Additionally, while the transformation used in our paper was linear, it would be interesting to investigate other injective transformations that still produce caustics amenable to geometric analysis. In particular, what non-linear yet injective transformations yield caustics that can still be described and understood?

\bibliographystyle{plain}
\bibliography{Interesting_Caustics}

\end{document}